\numberwithin{equation}{section}
\newtheorem{theorem}{Theorem}[section]
\newtheorem{lemma}[theorem]{Lemma}
\theoremstyle{definition}
\newtheorem{remark}[theorem]{Remark}
\newtheorem{example}[theorem]{Example}
\def\B{\mathscr B}
\def\Q{\mathscr Q}
\def\H{\mathsf H}
\def\K{\mathsf K}
\def\X{\mathsf X}
\def\Y{\mathsf Y}
\def\C{\mathscr C}
\def\dom{\text{\rm dom}}
\def\ran{\text{\rm ran}}
\def\RE{\mathbb R}
\def\CO{{\mathbb C}}
\def\uno{\mathbb 1}
\def\WW{W}
\def\VV{V}
\def\be{\begin{equation}}
\def\ee{\end{equation}}
\begin{document}
\renewcommand{\subjclassname}{%
\textup{2010} Mathematics Subject Classification}

\title[On inverses of Krein's $\Q$-functions]{On inverses of Krein's $\Q$-functions}

\keywords{Kre\u\i n resolvent formula, self-adjoint extensions of symmetric operators,  $\Q$-functions, Weyl functions.}
\subjclass[2010]{47B25, 
47A56, 
47A10.
}

\author{Claudio Cacciapuoti}
\address{DiSAT, Sezione di Matematica, Universit\`a dell'Insubria, via Valleggio 11, I-22100
Como, Italy}
\email{claudio.cacciapuoti@unisubria.it}

\author{Davide Fermi}
\address{Dipartimento di Matematica, Universit\`a di Milano, Via Cesare Saldini 50, I-20133 Milano, Italy }
\email{davide.fermi@unimi.it}

\author{Andrea Posilicano}
\address{DiSAT, Sezione di Matematica, Universit\`a dell'Insubria, via Valleggio 11, I-22100
Como, Italy}
\email{andrea.posilicano@unisubria.it}

\dedicatory{Dedicated to Gianfausto Dell'Antonio on the occasion of his 85th birthday}

\begin{abstract} Let $A_{Q}$ be the self-adjoint operator defined by the $\Q$-function $Q:z\mapsto Q_{z}$  through the Kre\u\i n-like resolvent formula $$(-A_{Q}+z)^{-1}=
(-A_{0}+z)^{-1}+G_{z}WQ_{z}^{-1}VG_{\bar z}^{*}\,,\quad z\in Z_{Q}\,,$$ where  $\VV$ and $\WW$ are bounded operators and $$Z_{Q}:=\{z\in\rho(A_{0}):\text{$Q_{z}$ and $Q_{\bar z }$ have a bounded inverse}\}\,.$$ 
We show that $$Z_{Q}\not=\emptyset\quad\Longrightarrow\quad Z_{Q}=\rho(A_{0})\cap \rho(A_{Q})\,.$$ 
We do not suppose that $Q$ is represented in terms of a uniformly strict, operator-valued Nevanlinna function (equivalently, we do not assume that $Q$ is associated to an ordinary boundary triplet), thus our result extends previously known ones. The proof relies on simple algebraic computations stemming from the first resolvent identity.
\end{abstract}

\maketitle

\begin{section}{Introduction}
Let $A_{0}:\dom(A_{0})\subseteq\H\to\H$ be a self-adjoint operator in the Hilbert space $\H$ and let $S:\dom(S)\subseteq\H\to\H$ be the symmetric operator given by the restriction of $A_{0}$ to the kernel (assumed to be dense) of the continuous (w.r.t. the graph norm) linear map $\tau:\dom(A_{0})\to \K$, $\K$ being an auxiliary Hilbert space.  By \cite[Theorem 2.1]{P01} (see Theorem \ref{teo} in the next section), a family of self-adjoint extensions of $S$ can be defined through the Kre\u\i n-like resolvent formula 
\be\label{krein}
(-A_{Q}+z)^{-1}=
(-A_{0}+z)^{-1}+G_{z}\WW Q_{z}^{-1}\VV G_{\bar z}^{*}\,,\quad z\in Z_{Q}\,,
\ee
where $\VV$ and $\WW$ are bounded operators, 
$$Z_{Q}:=\{z\in\rho(A_{0}):\text{$Q_{z}$ and $Q_{\bar z }$ have a bounded inverse}\}$$ and $Q_{z}$ is a family of (not necessarily bounded) densely defined, closed linear maps such that 
\[
Q_{z}-Q_{w}=(z-w)\VV\tau(-A_{0}+w)^{-1}(\tau(-A_{0}+\bar z)^{-1})^{*}\WW\,,\qquad w,z\in\rho(A_{0})\,,
\]
and 
\be\label{Q2}
\VV^{*}(Q_{z}^{*})^{-1}\WW^{*}=\WW Q^{-1}_{\bar z}\VV\,,\qquad z\in Z_{Q}\,.
\ee 
By a slight abuse of terminology, we call such a map $Q:z\mapsto Q_{z}$ a $\Q$-function; for  the definition (in the case $V=\uno$ and $W=\uno$, where \eqref{Q2} reduces to $Q^{*}_{z}=Q_{\bar z}$) of a ``$\Q$-function of $S$ belonging to $A_{0}$'' (with values in the space of bounded operators) we refer to \cite[Definition 3]{DM} and to the original papers \cite{K44} (defect indices $n_{\pm}(S)=1$), \cite{K46} (finite defect indices), \cite {Saa} (infinite defect indices). Evidently the above definition of $A_{Q}$ by \eqref{krein} only requires $Z_{Q}\not=\emptyset$.  However, taking into account formula \eqref{krein}, one would expect $\rho(A_{0})\cap\rho(A_{Q})\subseteq Z_{Q}$ (hence $Z_{Q}=\rho(A_{0})\cap\rho(A_{Q})$ since $Z_{Q}\subseteq \rho(A_{0})\cap\rho(A_{Q})$ by $Z_{Q}\subseteq \rho(A_{0})$ and \eqref{krein}); moreover, in order to treat scattering theory for the couple $(A_{Q},A_{0})$ through a limiting absorption principle (see \cite{MPS}, \cite{MPS-JDE2}, \cite{MP}, \cite{CFP}), one at least would need $\CO\backslash\RE\subseteq Z_{Q}$. The aim of this work is to show that if $Z_{Q}$ is not empty then it  necessarily coincides with $\rho(A_{0})\cap\rho(A_{Q})$ (and so it always contains the whole $\CO\backslash\RE$).  
In the case the map $\tau$ is surjective, i.e., $\ran(\tau)=\K$, and $\VV=\pi$, $\WW=\pi^{*}$, $\pi$ an orthogonal projector onto a closed subspace of $\K$ (coinciding with $\K$ itself in the case $\pi=\uno$), then (see \cite{P04}, \cite[Section 4]{P08}) the construction provided in \cite{P01} is equivalent to the one given by boundary triplet theory (we refer to \cite{DM}, \cite[Section 1]{BGP}, \cite[Section 7.3]{DHMdS}, \cite[Section 14]{Schm} and references therein for an introduction to such a theory). Thus, in this case, $Q$ can be expressed in terms of a self-adjoint operator  and an holomorphic function $M:z\mapsto M_{z}$ with values in the space of bounded operators
 such that $M_{z}=M^{*}_{\bar z}$ and $0\in\rho(M_{z}-M_{z}^{*})$ (see \cite[Theorem 1]{DM}, \cite [Theorem 7.15]{DHMdS}), i.e., $M$ is a uniformly strict Nevanlinna operator function. Hence, whenever $\ran(\tau)=\K$, $\VV=\pi$,  $\WW=\pi^{*}$, one gets $Z_{Q}=\rho(A_{0})\cap\rho(A_{Q})$ by standard arguments  (see \cite[Theorem 2]{DM}, \cite [Theorem 7.16]{DHMdS}; see also \cite[Proposition 2.1]{P01}, \cite[Theorem 2.1]{P08}). Since, by the correspondence with von Neumann's theory (see \cite{P03}, \cite{P08}), any self-adjoint extension of $S$ can be defined through \eqref{krein} assuming the hypothesis $\ran(\tau)=\K$ (equivalently, using the corresponding ordinary boundary triplet, see \cite{DM}, \cite[Theorem 14.7]{Schm}), these results seems to settle down our questions about $Z_{Q}$ (at least in the case $\VV=\pi$,  $\WW=\pi^{*}$).  
However, in cases where the defect indices of $S$ are not finite, in particular in applications to partial differential operators,  it can be much more convenient to do not require $\ran(\tau)=\K$ (and sometimes $\VV\not=\uno$, $\WW\not=\uno$) and so to do not use ordinary boundary triplets (see, e.g., \cite{DFT}, \cite{BEK}, \cite{DFT97}, \cite{P01}, \cite{Kond}, \cite{Ex}, \cite{EK}, \cite{EK05}, \cite{BL07}, \cite{BL}, \cite{BLL}, \cite{BM}, \cite{MP}, \cite{CFP-INdAM}, \cite{CFP}). While some results regarding the validity of \eqref{krein} for any $z\in\rho(A_{0})\cap\rho(A_{Q})$ are known even for not ordinary boundary triplets (as generalized boundary triplets and quasi-boundary triplets, see e.g., \cite{BL}, \cite{DHMdS}, \cite{BLLR}), some additional hypotheses are required in these cases (which moreover do not necessarily conform to our framework). Here, see Theorem \ref{TeoInv} in the next section, we provide a simple proof of $$Z_{Q}\not=\emptyset\quad\Rightarrow\quad Z_{Q}=\rho(A_{0})\cap\rho(A_{Q})$$ in the case $\ran(\tau)\not=\K$, $\VV\not=\pi$, $\WW\not=\pi^{*}$, without further hypotheses. 

\end{section}
\begin{section}{Inverses of Krein's $\Q$-functions}

Let $\H$ and ${\K}$ be Hilbert spaces with scalar products (which we assume to be conjugate-linear w.r.t. the first variable)
$\langle\cdot,\cdot\rangle_{\H}$ and $\langle\cdot,\cdot\rangle_{{\K}}$. In the following, for notational convenience, we do not identify ${\K}$ with its dual ${\K}^{*}$; however we use ${\K}^{**}\equiv{\K}$. We denote by $\langle\cdot,\cdot\rangle_{{\K}^{*}\!,{\K}}$ the ${\K}^{*}$-${\K}$ duality (conjugate-linear w.r.t. the first variable) defined by 
$\langle\psi,\phi\rangle_{{\K}^{*}\!,{\K}}:=\langle J^{-1}\psi,\phi\rangle_{{\K}}$, where $J:{\K}\to{\K}^{*}$ is the duality mapping given by the differential of  $\phi\mapsto\frac12\langle\phi,\phi\rangle_{{\K}}$.\par 
Given the self-adjoint operator $$A_{0}:\dom(A_{0})\subseteq\H\to\H\,,$$ we consider a continuous (w.r.t. the graph norm in $\dom(A_0)$) linear map $$\tau:\dom(A_0)\to{\K}$$ such that 
\be\label{dense}
\text{ker$(\tau)$ is dense in $\H$.}
\ee   
\begin{remark} Notice that we do not suppose that $\ran(\tau)=\K$. This means that the corresponding (accordingly to \cite{P04}) boundary triplet is not an ordinary boundary triplet. See the successive Remark \ref{nodense} for the case in which $\ker(\tau)$ is not dense.
\end{remark}
For any $z\in\rho(A_{0})$ we define $R^0_{z}\in\B(\H,\dom(A_0))$  by $R^{0}_{z}:=(-A_{0}+z)^{-1}$ and $G_{z}\in\B({{\K}}^{*},\H)$ by  
$$ G_{z}:{{\K}}^{*}\to\H\,, \quad G_{z}:=(\tau R^{0}_{\bar z})^*\,,
$$
i.e.,
\[
\langle G_{z}\phi,u\rangle_{\H}=\langle\phi,\tau(-A_{0}+\bar z)^{-1}u\rangle_{{\K}^{*}\!,{\K}}
\quad \phi\in{\K}^{*}\,,\ u\in\H\,.
\]
By \eqref{dense}, one has (see \cite[Remark 2.9]{P01}),
\[
\ran(G_{z})\cap\dom(A_0)=\{0\}
\]
and,  by the resolvent identity, 
\be\label{Gzw}
G_{z}-G_{w}=(w-z)R^0_{w}G_{z}\,,
\ee  
so that
\be\label{reg}
\ran(G_{z}-G_{w})\subset\dom(A_0)\,.
\ee
\begin{remark} Notice that \eqref{Gzw} is equivalent to 
\be\label{Gzw'}
G_{z}=\left(1+(w-z)R^0_{z}\right)G_{w}\,.
\ee 
\end{remark}
Let $\X$ and $\Y$ be two Hilbert spaces and let $\VV$ and $\WW$ be two bounded operators,   $\VV\in \B(\K,\X)$ and $\WW\in \B(\Y,{\K}^*)$. Given a not empty set $ Z_{\Lambda}\subseteq\rho(A_{0})$, symmetric with respect to the real axis (i.e., $z\in Z_{\Lambda}\Rightarrow \bar z\in Z_{\Lambda}$), we consider a map $$\Lambda:Z_{\Lambda}\to \B({\X} ,\Y)\,,\qquad z\mapsto \Lambda_{z}\,,
$$ 
such that
\be\label{Lambda1}
\VV^* \Lambda_{z}^* \WW^* =\WW \Lambda_{\bar z}\VV \,,
\ee
\be\label{Lambda2} 
\Lambda_{z}-\Lambda_{w}=(w-z)\Lambda_{w}\VV G_{\bar w}^{*}G_{z}\WW \Lambda_{z}\,.
\ee 
\begin{remark} Notice that \eqref{Lambda2} is equivalent to 
\be\label{Lambda2'} 
\Lambda_{z}=\left(1+(w-z)\Lambda_{z}\VV G_{\bar z}^{*}G_{w}\WW \right)\Lambda_{w}\,.
\ee 
\end{remark}
Notice that, by \eqref{Lambda1} and \eqref{Lambda2}, the map $\widetilde \Lambda_{z}:= \WW \Lambda_{ z}\VV : \K \to {\K}^*$ satisfies the relations 
 $$\widetilde \Lambda_{z}^*  =\widetilde \Lambda_{\bar z}$$ and $$\widetilde \Lambda_{z}-\widetilde \Lambda_{w}=(w-z)\widetilde \Lambda_{w}G_{\bar w}^{*}G_{z}\widetilde \Lambda_{z}\,,$$ see \cite[equations (2) and (4)]{P01}. Hence, 
building on \cite[Theorem 2.1]{P01},  one has (see \cite[Theorem 2.4 and Remark 2.5]{MP}; our $\widetilde \Lambda_{z}= \WW \Lambda_{ z}\VV$ corresponds to the operator there denoted by $\Lambda_{z}$)
\begin{theorem}\label{teo}
Let $\Lambda:Z_{\Lambda}\to \B({\X} ,{\Y})$ satisfy \eqref{Lambda1} and \eqref{Lambda2}. Then there exists a unique self-adjoint extension $A_{\Lambda}$ of the closed symmetric operator $S:=A_{0}|\ker(\tau)$ such that $Z_{\Lambda}\subseteq \rho(A_{0})\cap \rho(A_{\Lambda})$ and  
\be\label{resolvent}
(-A_{\Lambda}+z)^{-1}=R^0_{z}+G_{z}\WW \Lambda_{z}\VV G^{*}_{\bar z}\,,\qquad z\in Z_{\Lambda}\,.
\ee
\end{theorem}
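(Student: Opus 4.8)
The statement is, in view of the discussion preceding it, nothing but \cite[Theorem 2.1]{P01} (equivalently \cite[Theorem 2.4 and Remark 2.5]{MP}) applied to the family $\widetilde\Lambda_{z}:=\WW\Lambda_{z}\VV$, which by \eqref{Lambda1}--\eqref{Lambda2} satisfies $\widetilde\Lambda_{z}^{*}=\widetilde\Lambda_{\bar z}$ and
\be
\widetilde\Lambda_{z}-\widetilde\Lambda_{w}=(w-z)\,\widetilde\Lambda_{w}\,G_{\bar w}^{*}G_{z}\,\widetilde\Lambda_{z}\,,\qquad z,w\in Z_{\Lambda}\,;
\ee
so the shortest route is a one-line appeal to that result, upon defining $A_{\Lambda}$ to be the operator it produces for $\widetilde\Lambda$ and substituting $\widetilde\Lambda_{z}=\WW\Lambda_{z}\VV$ in the resolvent formula to recover \eqref{resolvent}. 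The plan below instead recalls the construction behind the cited result, so that the proof is self-contained.

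The candidate resolvent is
\be
R_{z}:=R^0_{z}+G_{z}\widetilde\Lambda_{z}G_{\bar z}^{*}\in\B(\H)\,,\qquad z\in Z_{\Lambda}\,,
\ee
and I would check, in order: (i) the first resolvent identity $R_{z}-R_{w}=(w-z)R_{w}R_{z}$ on $Z_{\Lambda}$; (ii) $\ker(R_{z})=\{0\}$; (iii) $\overline{\ran(R_{z})}=\H$; (iv) $R_{z}^{*}=R_{\bar z}$. From (i)--(iii) the standard characterisation of pseudoresolvents with trivial kernel and dense range gives a unique closed, densely defined $A_{\Lambda}$ with $Z_{\Lambda}\subseteq\rho(A_{\Lambda})$ and $(-A_{\Lambda}+z)^{-1}=R_{z}$, and (iv) upgrades it to self-adjoint. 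The inclusion $S\subseteq A_{\Lambda}$ drops out of (iii): since $G_{\bar z}^{*}=\tau R^0_{z}$, for $u\in\ker(\tau)=\dom(S)$ one gets $G_{\bar z}^{*}(-A_{0}+z)u=\tau R^0_{z}(-A_{0}+z)u=\tau u=0$, hence $R_{z}(-A_{0}+z)u=u$, so $(-A_{0}+z)u\in\ran(R_{z})=\dom(A_{\Lambda})$ and $A_{\Lambda}u=A_{0}u=Su$. Uniqueness of $A_{\Lambda}$ is clear because $Z_{\Lambda}\neq\emptyset$.

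Steps (ii)--(iv) are short: for (ii), $R_{z}u=0$ forces $R^0_{z}u=-G_{z}\widetilde\Lambda_{z}G_{\bar z}^{*}u\in\ran(G_{z})\cap\dom(A_{0})=\{0\}$, whence $u=0$; (iii) follows from the displayed computation above and the density of $\ker(\tau)$ assumed in \eqref{dense}; (iv) uses $(R^0_{z})^{*}=R^0_{\bar z}$, $G_{z}^{**}=G_{z}$ and $\widetilde\Lambda_{z}^{*}=\widetilde\Lambda_{\bar z}$. The one genuinely computational point, and the step I expect to cost the most, is (i): inserting the definition of $R_{z}$, expanding $(w-z)R_{w}R_{z}$, and rewriting the term $(w-z)R^0_{w}G_{z}\widetilde\Lambda_{z}G_{\bar z}^{*}$ via $G_{z}-G_{w}=(w-z)R^0_{w}G_{z}$, the term $(w-z)G_{w}\widetilde\Lambda_{w}G_{\bar w}^{*}R^0_{z}$ via $G_{\bar z}^{*}-G_{\bar w}^{*}=(w-z)G_{\bar w}^{*}R^0_{z}$ (both coming from \eqref{Gzw}--\eqref{Gzw'}), and the term $(w-z)G_{w}\widetilde\Lambda_{w}G_{\bar w}^{*}G_{z}\widetilde\Lambda_{z}G_{\bar z}^{*}$ via the operator resolvent identity for $\widetilde\Lambda$ displayed above; after this the surplus terms cancel in pairs and one is left exactly with $R_{z}-R_{w}=(R^0_{z}-R^0_{w})+(G_{z}\widetilde\Lambda_{z}G_{\bar z}^{*}-G_{w}\widetilde\Lambda_{w}G_{\bar w}^{*})$. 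The only delicate part is keeping track of which of $z$, $w$ sits in which slot.
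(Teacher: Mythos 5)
Your proposal is correct and follows essentially the paper's own route: the paper proves Theorem \ref{teo} precisely by noting that $\widetilde\Lambda_{z}:=\WW\Lambda_{z}\VV$ satisfies $\widetilde\Lambda_{z}^{*}=\widetilde\Lambda_{\bar z}$ together with the corresponding resolvent-type identity, and then invoking \cite[Theorem 2.1]{P01} and \cite[Theorem 2.4 and Remark 2.5]{MP}. Your additional self-contained sketch (first resolvent identity for $R_{z}:=R^{0}_{z}+G_{z}\widetilde\Lambda_{z}G^{*}_{\bar z}$, trivial kernel via $\ran(G_{z})\cap\dom(A_{0})=\{0\}$, dense range via \eqref{dense}, and $R_{z}^{*}=R_{\bar z}$) is a faithful reconstruction of the construction in the cited reference, and the cancellations you indicate do go through.
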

\begin{remark} Any self-adjoint extension of $S$ is of the kind provided by the previous theorem (see \cite{P03}, \cite{P08}).
\end{remark}
From now on we use the shorthand notation 
$$
R_{z}^{\Lambda}:=(-A_{\Lambda}+z)^{-1}\,,\qquad z\in \rho(A_{\Lambda})\,.
$$
\begin{lemma}  For any $w$ and $z$ in $Z_{\Lambda}$ one has
\be\label{Lambda3}
\Lambda_{z}-\Lambda_{w}=(w-z)\Lambda_{w}\VV G_{\bar w}^{*}\left(1+(w-z)R^{\Lambda}_{z}\right)G_{w}\WW \Lambda_{w}\,.
\ee
\end{lemma}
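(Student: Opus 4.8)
The plan is to deduce \eqref{Lambda3} from the defining relation \eqref{Lambda2} by inserting into it a single auxiliary identity, namely
$$G_{z}\WW\Lambda_{z}=\bigl(1+(w-z)R^{\Lambda}_{z}\bigr)G_{w}\WW\Lambda_{w}\,,\qquad w,z\in Z_{\Lambda}\,.$$
Granting this for the moment, one simply replaces the factor $G_{z}\WW\Lambda_{z}$ appearing on the right-hand side of \eqref{Lambda2} and obtains exactly \eqref{Lambda3}.

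To prove the auxiliary identity I would start from its right-hand side and expand $R^{\Lambda}_{z}$ through the resolvent formula \eqref{resolvent}, $R^{\Lambda}_{z}=R^{0}_{z}+G_{z}\WW\Lambda_{z}\VV G^{*}_{\bar z}$. This yields three summands: $G_{w}\WW\Lambda_{w}$, the term $(w-z)R^{0}_{z}G_{w}\WW\Lambda_{w}$, and the term $(w-z)\,G_{z}\WW\Lambda_{z}\VV G^{*}_{\bar z}G_{w}\WW\Lambda_{w}$. The first two combine, by \eqref{Gzw'} written as $G_{z}=(1+(w-z)R^{0}_{z})G_{w}$, into $G_{z}\WW\Lambda_{w}$; the third one is $G_{z}\WW\bigl[(w-z)\Lambda_{z}\VV G^{*}_{\bar z}G_{w}\WW\Lambda_{w}\bigr]$. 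Summing and using \eqref{Lambda2'} in the form $\Lambda_{z}=\Lambda_{w}+(w-z)\Lambda_{z}\VV G^{*}_{\bar z}G_{w}\WW\Lambda_{w}$ to recognize that $\Lambda_{w}$ plus the bracket equals $\Lambda_{z}$, one arrives at $G_{z}\WW\Lambda_{z}$, which is the left-hand side. An equivalent route is to start instead from $G_{z}\WW\Lambda_{z}$, rewrite $G_{z}$ via \eqref{Gzw'} and $\Lambda_{z}$ via \eqref{Lambda2'}, and then recombine the pieces using \eqref{resolvent}; the telescoping is the same.

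The whole argument is purely algebraic: all the operators in play ($G_{z}$, $G^{*}_{\bar z}$, $\VV$, $\WW$, $\Lambda_{z}$, $R^{0}_{z}$, $R^{\Lambda}_{z}$) are bounded for $z\in Z_{\Lambda}$, so there are no domain or closability issues to worry about. The only delicate point is bookkeeping --- using \eqref{Gzw'} and \eqref{Lambda2'} at exactly the right stage, and keeping the two resolvents $R^{0}$, $R^{\Lambda}$ and the signs of the differences $w-z$ straight --- so that the cancellations actually produce the telescoped form \eqref{Lambda3}. Conceptually, the auxiliary identity records the fact that $z\mapsto G_{z}\WW\Lambda_{z}$ obeys, under a change of base point, the same transformation rule as the $\gamma$-field of the extension $A_{\Lambda}$, which is exactly what \eqref{resolvent} forces; this is the only real observation behind the computation.
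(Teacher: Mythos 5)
Your proposal is correct and is essentially the paper's own argument: the auxiliary identity $G_{z}\WW\Lambda_{z}=\bigl(1+(w-z)R^{\Lambda}_{z}\bigr)G_{w}\WW\Lambda_{w}$ you isolate is exactly the chain the paper carries out inside the prefactor $(w-z)\Lambda_{w}\VV G_{\bar w}^{*}(\,\cdot\,)$, using the same three ingredients \eqref{Lambda2'}, \eqref{Gzw'} and \eqref{resolvent} in the same way. The only difference is presentational (you prove the identity right-to-left, the paper left-to-right), so nothing further is needed.
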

\begin{proof} Taking into account relations \eqref{Lambda2}, \eqref{Lambda2'}, \eqref{Gzw'}  and \eqref{resolvent}, one gets 
\begin{align*}
&\Lambda_{z}-\Lambda_{w}\\
=&(w-z)\Lambda_{w}\VV G_{\bar w}^{*}\left(G_{z}+
(w-z)G_{z}\WW \Lambda_{z}\VV G_{\bar z}^{*}G_{w}\right)\WW \Lambda_{w}\\
=&
(w-z)\Lambda_{w}\VV G_{\bar w}^{*}\left(1+(w-z)R^{0}_{z}+(w-z)G_{z}\WW \Lambda_{z}\VV G_{\bar z}^{*}\right)G_{w}\WW \Lambda_{w}\\
=&(w-z)\Lambda_{w}\VV G_{\bar w}^{*}\left(1+(w-z)R^{\Lambda}_{z}\right)G_{w}\WW \Lambda_{w}\,.\end{align*}
\end{proof}
Obviously, by \eqref{resolvent}, 
$\rho(A_{\Lambda})\ni z\mapsto R^{\Lambda}_{z}$ is a $\B(\H)$-valued analytic extension of $Z_{\Lambda}\ni z\mapsto R^0_{z}+G_{z}\WW \Lambda_{z}\VV G^{*}_{\bar z}$. Thus, given $w\in Z_{\Lambda}$, relation \eqref{Lambda3} suggests to define an analytic extension  of $\Lambda$ by
$$\widehat\Lambda^{(w)}:\rho(A_{\Lambda})\to\B({\X},{\Y})\,,$$
\be\label{Lambda4}
\widehat\Lambda^{(w)}_{z}:=\Lambda_{w}+(w-z)\Lambda_{w}\VV G_{\bar w}^{*}\left(1+(w-z)R^{\Lambda}_{z}\right)G_{w}\WW \Lambda_{w}\,.
\ee
\begin{lemma}  Suppose that $Z_{\Lambda}$ contains at least an accumulation point. Then $\widehat\Lambda^{(w)}$ is $w$-independent.
\end{lemma}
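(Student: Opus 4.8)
The plan is to recognise each $\widehat\Lambda^{(w)}$ as an analytic continuation of the \emph{same} map $\Lambda$ and then to invoke the identity principle for holomorphic functions. First I would note that, for every fixed $w\in Z_{\Lambda}$, the map $z\mapsto\widehat\Lambda^{(w)}_{z}$ defined by \eqref{Lambda4} is a $\B(\X,\Y)$-valued holomorphic function on the open set $\rho(A_{\Lambda})$: the only $z$-dependent ingredient on its right-hand side is $z\mapsto R^{\Lambda}_{z}$, which is $\B(\H)$-valued holomorphic on $\rho(A_{\Lambda})$, and it enters multiplied by the fixed bounded operators $\Lambda_{w},\VV,\WW,G_{\bar w}^{*},G_{w}$ and by the scalar polynomials $(w-z)$ and $(w-z)^{2}$. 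Moreover $\widehat\Lambda^{(w)}$ genuinely extends $\Lambda$: adding $\Lambda_{w}$ to both sides of \eqref{Lambda3} gives $\widehat\Lambda^{(w)}_{z}=\Lambda_{z}$ for all $w,z\in Z_{\Lambda}$. Hence, given $w_{1},w_{2}\in Z_{\Lambda}$, the two holomorphic maps $\widehat\Lambda^{(w_{1})}$ and $\widehat\Lambda^{(w_{2})}$ agree on all of $Z_{\Lambda}$, and it only remains to show that $Z_{\Lambda}$ is large enough to force them to agree everywhere on $\rho(A_{\Lambda})$.

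Here the hypothesis enters. Let $z_{0}\in Z_{\Lambda}$ be an accumulation point of $Z_{\Lambda}$; since $Z_{\Lambda}\subseteq\rho(A_{0})\cap\rho(A_{\Lambda})$, this $z_{0}$ lies in the open set $\rho(A_{\Lambda})$. For each $\phi\in\X$ and $\psi\in\Y$ the scalar function $z\mapsto\langle\psi,(\widehat\Lambda^{(w_{1})}_{z}-\widehat\Lambda^{(w_{2})}_{z})\phi\rangle_{\Y}$ is holomorphic on $\rho(A_{\Lambda})$ and vanishes on $Z_{\Lambda}$, which accumulates at $z_{0}$; by the classical identity theorem it vanishes throughout the connected component $\mathcal O$ of $\rho(A_{\Lambda})$ containing $z_{0}$, and since $\phi,\psi$ are arbitrary, $\widehat\Lambda^{(w_{1})}=\widehat\Lambda^{(w_{2})}$ on $\mathcal O$.

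The one point needing care is to upgrade this to equality on the whole of $\rho(A_{\Lambda})$. Since $A_{\Lambda}$ is self-adjoint, $\sigma(A_{\Lambda})\subseteq\RE$, so there are only two possibilities. Either $\sigma(A_{\Lambda})\neq\RE$, in which case $\rho(A_{\Lambda})$ is connected (a disc around any real point of $\rho(A_{\Lambda})$ joins the two open half-planes), so $\mathcal O=\rho(A_{\Lambda})$ and we are done; or $\sigma(A_{\Lambda})=\RE$, in which case $\rho(A_{\Lambda})=\CO\setminus\RE$ has exactly the two connected components given by the open upper and lower half-planes. In the latter case $z_{0}\notin\RE$; since $Z_{\Lambda}$ is symmetric with respect to the real axis and complex conjugation is a homeomorphism of $\CO$, the point $\bar z_{0}\in Z_{\Lambda}$ is again an accumulation point of $Z_{\Lambda}$ and it lies in the component not containing $z_{0}$. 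Applying the argument of the previous paragraph to $\bar z_{0}$ yields $\widehat\Lambda^{(w_{1})}=\widehat\Lambda^{(w_{2})}$ on that component too, hence on all of $\rho(A_{\Lambda})$. Thus $\widehat\Lambda^{(w)}_{z}$ does not depend on the base point $w\in Z_{\Lambda}$.
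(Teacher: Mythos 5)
Your proof is correct and follows essentially the same route as the paper: both establish agreement on $Z_{\Lambda}$ via \eqref{Lambda3} and then apply the identity theorem, splitting into the case where $\rho(A_{\Lambda})$ is connected and the case $\rho(A_{\Lambda})=\CO\setminus\RE$. Your version is slightly more careful on one point the paper leaves implicit, namely using the symmetry of $Z_{\Lambda}$ with respect to the real axis to guarantee an accumulation point in each of the two half-planes in the disconnected case.
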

\begin{proof} Let $w_{1}\not=w_{2}$. At first suppose that $A_{\Lambda}$ has a spectral gap, equivalently $\rho(A_{\Lambda})$ is a connected subset of $\CO$. Since 
$\widehat\Lambda^{(w_{1})}=\widehat\Lambda^{(w_{2})}$ on $Z_{\Lambda}$ by \eqref{Lambda3}, then 
$\widehat\Lambda^{(w_{1})}=\widehat\Lambda^{(w_{2})}$ on the whole $\rho(A_{\Lambda})$ by the Identity Theorem for analytic functions. Conversely suppose that $\rho(A_{\Lambda})=\CO_{-}\cup\CO_{+}$, where $\CO_{\pm}:=\{z\in\CO:\pm\text{Im}(z)>0\}$. Then the thesis is consequence of the same argument separately applied to the connected sets $\CO_{-}$ and $\CO_{+}$.   
\end{proof}
\begin{remark}\label{Lambda*} Suppose that $\widehat\Lambda^{(w)}$ in \eqref{Lambda4} does not depend on the choice of $w\in Z_{\Lambda}$, $\widehat\Lambda_{z}\equiv\widehat\Lambda_{z}^{(w)}$; then $\VV^{*}\widehat\Lambda_{z}^{*}\WW^{*}=\WW\widehat\Lambda_{\bar z}\VV$ : by \eqref{Lambda1} and $(R^{\Lambda}_{z})^{*}=R^{\Lambda}_{\bar z}$, one has
$$
\VV^{*}\widehat\Lambda_{z}^{*}\WW^{*}=\WW\Lambda_{\bar w}\VV+(\bar w-\bar z)\WW\Lambda_{\bar w}\VV G_{ w}^{*}\left(1+(\bar w-\bar z)R^{\Lambda}_{\bar z}\right)G_{\bar w}\WW\Lambda_{\bar w}\VV=\WW\widehat\Lambda_{\bar z}\VV\,.
$$
\end{remark}
The previous lemma suggests that the Kre\u\i n-like resolvent formula \eqref{resolvent} could hold on a larger set, i.e., 
$$
(-A_{\Lambda}+z)^{-1}=R^0_{z}+G_{z}\WW \widehat \Lambda_{z}\VV  G^{*}_{\bar z}\,,\qquad z\in \rho(A_{0})\cap \rho(A_{\Lambda})\,.
$$
Let us consider a map $$Q:\rho(A_{0})\to \C({\Y} ,{\X})\,,\qquad z\mapsto Q_{z}\,,
$$ (here $\C({\Y} ,{\X})$ denotes the set of closed linear operators) such that 
\be\label{M0}
\text{$\dom(Q_{z})$ is $z$-independent, $\dom(Q_{z})\equiv{\mathsf D}$, and dense, $\overline{{\mathsf D}}=\Y $,}\ee
\be\label{M2}
Q_{z}=Q_{w}+(z-w)\VV G_{\bar w}^{*}G_{z}\WW\qquad  z,w \in\rho (A_0)\,.
\ee
Defining
\[\begin{aligned}
Z_{Q}&&\\
:=&\{z\in\rho(A_{0}):\text{$Q_{z}$ and $Q_{\bar z}$ are bijections from ${\mathsf D}$ onto $\X$ with inverses in $\B({\X},{\Y})$}\},\\
\end{aligned}
\] 
we suppose that 
\be\label{ZM}
\text{$Z_{Q}\not=\emptyset$}
\ee
and 
\be\label{M1}
\VV^* ({Q_{z}^{*}})^{-1}\WW^*=\WW Q_{\bar z}^{-1}\VV \,, \qquad z\in Z_{Q}.
\ee
\begin{remark}\label{adj}
Notice that the left hand side of \eqref{M1} is well defined: since $z\in Z_{Q}$, $Q_{z}^{-1}$ is bounded and so its adjoint exists (and is bounded); moreover $\ker(Q_{z}^{*})=\ran(Q_{z})^{\perp}=\X^{\perp}=\{0\}$ and so $Q_{z}^{*}$ is invertible and $(Q_{z}^{*})^{-1}=(Q_{z}^{-1})^{*}$. 
\end{remark}
\begin{remark} Notice that $Q_{w}$, $w\in Z_{Q}$, is closed since it is the inverse of a bounded (hence closed) operator. Then $Q_{z}$, $z\in \rho(A_{0})$, is closed since, by \eqref{M2}, it differs from $Q_{w}$ by a bounded operator. 
\end{remark}
\begin{remark}\label{uno} Notice that if $\VV=\uno$ (or $\WW=\uno$)  then 
\eqref{M1} follows from $Q_{z}^{*}\WW=\WW^{*}Q_{\bar z}$ (or $\VV Q_{z}^{*}=Q_{\bar z}\VV^*$). 
\end{remark}
The set of maps satisfying \eqref{M0}-\eqref{M1} is not void, we give some examples. Below we 
consider a Weyl function $M:\rho(A_{0})\to\B(\K^{*},\K)$, $z\mapsto M_{z}$, i.e.,  a $\B(\K^{*},\K)$-valued map such that 
\be\label{W}
M_{z}^{*}=M_{\bar z}\,,\qquad M_{z}-M_{w}=(z-w)G^{*}_{\bar w}G_{z}\,.
\ee 
The canonical representation is $M_{z}:=\tau((G_{z_{0}}+G_{\bar z_{0}})/2-G_{z})$, $z_{0}\in\rho(A_{0})$, (see \cite[Lemma 2.2]{P01}; it is well defined thanks to \eqref{reg}). In the case $\tau$ has a bounded extension to $\ran(G_{z})$ (eventually considering a range space for $\tau$ larger than the original $\K$), one can take $M_{z}:=-\tau G_{z}$. 
\begin{example} Let $\X$ be a closed subspace of $\K$ and let $\pi:\K\to\K$, $\ran(\pi)=\X$, be the corresponding orthogonal projector. Then $\pi^{*}:\K^{*}\to\K^{*}$ is an orthogonal projector as well. Let us set $\Y:=\X^{*}=\ran(\pi^{*})$, $\VV:=\pi:\K\to\X$, $\WW:=\pi^{*}:\Y\to\K^{*}$. Given $\Theta:\dom(\Theta)\subseteq\X^{*}\to\X$ self-adjoint and a Weyl function $M:\rho(A_{0})\to\B(\K^{*},\K)$, $z\mapsto M_{z}$, we define $Q_{z}:\dom(\Theta)\subseteq\Y\to\X$ by $Q_{z}:=\Theta+\VV M_{z}\WW$. 
 If one further supposes that $\tau$ is surjective, i.e., $\ran(\tau)={\K}$, then $\CO\backslash\RE\subseteq Z_{Q}$ (see \cite[Proposition 2.1]{P01}, \cite[Theorem 2.1]{P08}). $Q:z\mapsto Q_{z}$ satisfies \eqref{M0}, \eqref{M2} and $Q_{z}^{*}=Q_{\bar z}$ by \eqref{W}. So 
$(Q^{-1}_{z})^{*}=(Q^{*}_{z})^{-1}=Q^{-1}_{\bar z}$, $z\in Z_{Q}$. Since $\VV$ and $\WW$ are orthogonal projectors, this gives \eqref{M1}. For explicit examples where such kind of maps appear in applications to partial differential operators, see \cite{Grubb}, \cite{P08}, \cite{PR}, \cite{MMM}, \cite{GM}, \cite{Grubb12}, \cite{MSP-JDE1}, \cite{CPP}, \cite{MPS} and references therein. As Theorem \ref{TeoInv} below shows, it is not necessary to suppose $\ran(\tau)=\K$ whenever one knows that $Z_{Q}\not=\emptyset$. 
\end{example}  
\begin{example} Let $\alpha\in\B(\K,\K^{*})$, $\alpha^{*}=\alpha$, and let $M:\rho(A_{0})\to\B(\K^{*},\K)$, be a Weyl function. Suppose that there exists $c>0$ such that $\| M_{z}\|_{\B(\K^{*},\K)}<\|\alpha\|_{\B(\K,\K^{*})}^{-1}$ whenever $|\text{Im}(z)|>c$. Then define $Q_{z}\in \B(\K^{*})$ by $Q_{z}:=-(\uno-\alpha M_{z})$. It is immediate to check (also use Remark \ref{uno}) that $Q:z\mapsto Q_{z}$ satisfies \eqref{M0}-\eqref{M1} with $\X=\Y=\K^{*}$, $\VV=\alpha$, $\WW=\uno$ and $Z_{Q}=\{z\in\rho(A_{0}):|\text{Im}(z)|>c\}$. Such kind of maps appears in the definition of Laplacians with $\delta$-type potentials supported on a compact hypersurface (see \cite{BLL}, \cite[Section 5.4]{MP}, \cite{MPS-JDE2} and references therein); in such references  it is proven that $\CO\backslash\RE\subseteq Z_{Q}$ by analytic Fredholm theory ($M_{z}$ is a compact operators in these examples). As Theorem \ref{TeoInv} below shows, this  is not necessary, $Z_{Q}\not=\emptyset$ suffices. In the not compact case, for Laplacians with $\delta$-type potentials supported on a deformed plane, in \cite[Lemma 3.6]{CFP} it is proven $\CO\backslash\RE\subseteq Z_{Q}$ whenever the deformation is in  $C_{0}^{1,1}(\RE^{2})$, while $Z_{Q}\not=0$ whenever the deformation is in $C_{0}^{0,1}(\RE^{2})$, i.e., is Lipschitz (see \cite[Lemma 3.5]{CFP}). By Theorem \ref{TeoInv}, the latter hypothesis suffices to prove that $Z_{Q}=\rho(A_{0})\cap \rho(A_{\Lambda})$. 
\end{example}
\begin{example} Let $\VV\in\B(\K,\K^{*})$, $\WW\in\B(\K^{*},\K)$ such that $\VV^{*}\WW^{*}=\WW\VV$ and let $M:\rho(A_{0})\to\B(\K^{*},\K)$, $z\mapsto M_{z}$, be a Weyl function. Suppose that there exists $c>0$ such that $\| M_{z}\|_{\B(\K^{*},\K)}<\|\VV\|_{\B(\K,\K^{*})}^{-1}\|\WW\|_{\B(\K^{*},\K)}^{-1}$ whenever $|\text{Im}(z)|>c$. Then define $Q_{z}\in \B(\K^{*})$ by $Q_{z}:=-(\uno-\VV M_{z}\WW)$. It is immediate to check that $Q:z\mapsto Q_{z}$ satisfies \eqref{M0}, \eqref{M2} and  $Z_{Q}=\{z\in\rho(A_{0}):|\text{Im}(z)|>c\}$ with $\X=\Y=\K^{*}$. As regards \eqref{M1}, it holds by
\begin{align*}
&\VV^{*}(Q_{z}^{*})^{-1}\WW^{*}=-\VV^{*}(\uno-\WW^{*}M_{\bar z}\VV^{*})^{-1}\WW^{*}=
-\VV^{*}\left(\sum_{n=0}^{\infty}(\WW^{*}M_{\bar z}\VV^{*})^{n}\right)\WW^{*}\\
=&-\sum_{n=0}^{\infty}\VV^{*}\underbrace{\WW^{*}M_{\bar z}\VV^{*}\dots \WW^{*}M_{\bar z}\VV^{*}}_{\text{$n$-times}}\WW^{*}=-\sum_{n=0}^{\infty}\WW\underbrace{\VV M_{\bar z}\WW\dots \VV M_{\bar z}\WW}_{\text{$n$-times}}\VV\\
=&-\WW(\uno-\VV M_{\bar z}\WW)^{-1}\VV=\WW Q_{\bar z}^{-1}\VV\,.
\end{align*}
Alike maps appear in \cite[Appendix B]{Albeverio} and produce resolvent formulae similar to the (Kato-)Konno-Kuroda one (see \cite{Kato}, \cite{KK}). However in \cite[Appendix B]{Albeverio} it is assumed that the map $E^{*}F$, where $F:=V\tau$, $E:=W^{*}\tau$,  is infinitesimally bounded with respect to $|A_{0}|^{1/2}$ and that $M_{z}$ is compact.  As Theorem \ref{TeoInv} below shows, these hypotheses  are not necessary, $Z_{Q}\not=\emptyset$ suffices.
\end{example}
\begin{example} Let $Q:\rho(A_{0})\to\C(\Y,\X)$ be any map satisfying \eqref{M0}-\eqref{M1} with $\VV=\uno$ (or $\WW=\uno$) and let $B\in\B(\Y,\X)$ such that $B^{*}\WW=\WW^{*}B$ (or $\VV B^{*}=B\VV^{*}$). Define $\widetilde Q_{z}:=B+Q_{z}$. For any $z\in Z_{Q}$ one has $\widetilde Q_{z}=(1+BQ_{z}^{-1})Q_{z}$. Suppose that 
\begin{align*}\widetilde Z_{Q}&&\\
:=&\{z\in Z_{Q}:\text{$1+BQ_{z}^{-1}$ and $1+BQ_{\bar z}^{-1}$ are continuous bijections from $\X$ onto $\X$}\}
\end{align*} 
is not void. Then $\widetilde Q:z\mapsto \widetilde Q_{z}$ satisfies \eqref{M0}-\eqref{M1}. A map of such kind is used in \cite[section 5.5]{MP} to describe  Laplacians with $\delta'$-type potentials supported on compact Lipschitz hypersurfaces. There $Q_{z}^{-1}$ is  compact and it is proven that $\CO\backslash\RE\subseteq\widetilde Z_{Q}$ by analytic Fredholm theory. As Theorem \ref{TeoInv} below shows, $\widetilde Z_{Q}\not=\emptyset$ suffices to prove that $Z_{\widetilde Q}=\rho(A_{0})\cap \rho(A_{\Lambda})$.
\end{example}
Given $Q$ which satisfies \eqref{M0}-\eqref{M1}, it is immediate to check (also use Remark \ref{adj}) that 
$$\Lambda^{Q}:Z_{Q}\to\B({\X},{\Y})\,,\qquad
\Lambda^{Q}_{z}:=Q_{z}^{-1}\,,
$$ 
satisfies \eqref{Lambda1}  and \eqref{Lambda2} and thus we can apply Theorem \ref{teo}. 
From now on we use the notations 
$$A_{Q}:=A_{\Lambda^{Q}}\,,\qquad  R^{Q}_{z}:=(-A_{Q}+z)^{-1}\,,\quad z\in \rho(A_{Q})\,.
$$ 
According to \eqref{Lambda4}, we can introduce the analytic extension of $\Lambda^{Q}$ given by 
$$
\widehat\Lambda^{Q}:\rho(A_{Q})\to\B({\X},{\Y})\,,
$$
\be\label{LambdaM}
\widehat\Lambda^{Q}_{z}:=Q_{w}^{-1}+(w-z)Q_{w}^{-1}\VV G_{\bar w}^{*}\left(1+(w-z) R^{Q}_{z}\right)G_{w}\WW Q_{w}^{-1}\,,\quad w\in Z_{Q}\,.
\ee
\begin{remark} Notice that, since we are not supposing that $Z_{Q}$ contains an accumulation point, the extension  $\widehat\Lambda^{Q}$ could depend on the choice of the point $w\in Z_\Lambda$. This is not the case, as  Theorem \ref{TeoInv} shows. 
\end{remark}
At first we provide the following 
\begin{lemma} Let $\Lambda:Z_{\Lambda}\to \B({\X} ,{\Y})$ be as in Theorem \ref{teo}. Then,  for any $w\in Z_{\Lambda}$ and for any $z\in\rho(A_{0})\cap\rho(A_{\Lambda})$, one has 
\be\label{inv}
R^{\Lambda}_{z}-R^{0}_{z}
=\left(1+(w-z)R^{\Lambda}_{z}\right)G_{w}\WW \Lambda_{w}\VV G^{*}_{\bar w}
\left(1+(w-z)R_{z}^{0}\right).
\ee
\end{lemma}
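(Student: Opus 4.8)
The plan is to prove \eqref{inv} by a purely algebraic manipulation of bounded operators on $\H$, using only the first resolvent identity for $A_{0}$ and for $A_{\Lambda}$ together with the fact that \eqref{resolvent} already holds at the \emph{fixed} point $w$ (recall that $Z_{\Lambda}\subseteq\rho(A_{0})\cap\rho(A_{\Lambda})$ by Theorem \ref{teo}, so $w\in\rho(A_{0})\cap\rho(A_{\Lambda})$ as well). No analyticity argument is needed, which is the point: $Z_{\Lambda}$ may well be just a conjugate-symmetric pair of points with no accumulation point, so the Identity Theorem is not available, and anyway the statement is supposed to hold at \emph{every} $z\in\rho(A_{0})\cap\rho(A_{\Lambda})$.

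Concretely, I would set $D_{z}:=R^{\Lambda}_{z}-R^{0}_{z}$ and abbreviate the fixed bounded operator $B:=G_{w}\WW\Lambda_{w}\VV G^{*}_{\bar w}\in\B(\H)$, so that \eqref{resolvent} at $z=w$ reads $D_{w}=B$. Telescoping, $D_{z}=(R^{\Lambda}_{z}-R^{\Lambda}_{w})+(R^{\Lambda}_{w}-R^{0}_{w})+(R^{0}_{w}-R^{0}_{z})$, and the resolvent identities $R^{\Lambda}_{z}-R^{\Lambda}_{w}=(w-z)R^{\Lambda}_{z}R^{\Lambda}_{w}$, $R^{0}_{z}-R^{0}_{w}=(w-z)R^{0}_{z}R^{0}_{w}$ give $D_{z}=(w-z)R^{\Lambda}_{z}R^{\Lambda}_{w}-(w-z)R^{0}_{z}R^{0}_{w}+B$. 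Now substitute $R^{\Lambda}_{w}=R^{0}_{w}+B$ and collect the terms carrying a right factor $R^{0}_{w}$: this yields
\[
D_{z}\bigl(1-(w-z)R^{0}_{w}\bigr)=\bigl(1+(w-z)R^{\Lambda}_{z}\bigr)B\,.
\]

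The final step is to invert $1-(w-z)R^{0}_{w}$. A one-line computation, again from the first resolvent identity $R^{0}_{z}R^{0}_{w}=(R^{0}_{z}-R^{0}_{w})/(w-z)$ (equivalently, from $1-(w-z)R^{0}_{w}=R^{0}_{w}(-A_{0}+z)$ and $1+(w-z)R^{0}_{z}=R^{0}_{z}(-A_{0}+w)$ on $\dom(A_{0})$), shows that $1-(w-z)R^{0}_{w}$ and $1+(w-z)R^{0}_{z}$ are mutually inverse elements of $\B(\H)$ whenever $z,w\in\rho(A_{0})$. Multiplying the displayed identity on the right by $1+(w-z)R^{0}_{z}$ then produces exactly \eqref{inv}. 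I do not expect a real obstacle here; the only two points that require a little care are (i) handling the circular occurrence of $D_{z}$ — which is why one isolates $D_{z}(1-(w-z)R^{0}_{w})$ rather than trying to solve for $D_{z}$ on the spot — and (ii) the boundedness and invertibility of $1-(w-z)R^{0}_{w}$, both of which are immediate from the resolvent identity. Finally, one should note (as done in Remark \ref{Lambda*} for $\widehat\Lambda$) that $\VV\Lambda_{w}\WW$ appearing inside $B$ is symmetric in the appropriate duality sense, but that symmetry is not even needed for this particular lemma.
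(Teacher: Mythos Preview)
Your argument is correct. The telescoping $D_{z}=(w-z)R^{\Lambda}_{z}R^{\Lambda}_{w}-(w-z)R^{0}_{z}R^{0}_{w}+B$ followed by the substitution $R^{\Lambda}_{w}=R^{0}_{w}+B$ indeed produces $D_{z}\bigl(1-(w-z)R^{0}_{w}\bigr)=\bigl(1+(w-z)R^{\Lambda}_{z}\bigr)B$, and the mutual invertibility of $1-(w-z)R^{0}_{w}$ and $1+(w-z)R^{0}_{z}$ (which you verify directly from the resolvent identity) finishes the job.

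The paper takes a different, though equally elementary, route. It first records the functional-calculus identity $(w-z)\bigl(1+(w-z)R_{z}\bigr)=\bigl(-R_{w}+\tfrac{1}{w-z}\bigr)^{-1}$, valid for the resolvent of any self-adjoint operator, and then applies the algebraic rule $X^{-1}-Y^{-1}=X^{-1}(Y-X)Y^{-1}$ with $X=-R^{\Lambda}_{w}+\tfrac{1}{w-z}$ and $Y=-R^{0}_{w}+\tfrac{1}{w-z}$; since $Y-X=R^{\Lambda}_{w}-R^{0}_{w}=B$, the formula drops out in one line. Your approach avoids introducing the auxiliary operators $-R_{w}+\tfrac{1}{w-z}$ and instead isolates $D_{z}$ as the solution of a linear equation, which is arguably more transparent; the paper's approach is more symmetric (it treats $R^{\Lambda}$ and $R^{0}$ on the same footing throughout) and yields the companion identity \eqref{inv'} by a mere swap of roles, whereas in your scheme one would have to redo the telescoping with $R^{0}_{w}=R^{\Lambda}_{w}-B$ instead. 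In substance both proofs rest on the same fact, namely that $1+(w-z)R^{0}_{z}$ inverts $1-(w-z)R^{0}_{w}$.
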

\begin{proof} In the case $z=w$, \eqref{inv} reduces to \eqref{resolvent}. Hence it suffices to prove the thesis in the case $z\not=w$. By functional calculus, it is immediate to check that
\be\label{fc}
(w-z)\left(1+(w-z)R_{z}\right)=\left(-R_{w}+\frac1{w-z}\right)^{-1}
\ee
for any  $w,z\in \rho(A)$, $w\not= z$, where $R_{z}:=(-A+z)^{-1}$  is the resolvent of a self-adjoint operator $A$. Thus, by \eqref{fc} and \eqref{resolvent},
\begin{align*}
&(w-z)^{2}(R^{\Lambda}_{z}-R^{0}_{z})\\
=&(w-z)\left(1+(w-z)R^{\Lambda}_{z}\right)-(w-z)\left(1+(w-z)R^{0}_{z}\right)\\
=&\left(-R^{\Lambda}_{w}+\frac1{w-z}\right)^{-1}-\left(-R^{0}_{w}+\frac1{w-z}\right)^{-1}\\
=&\left(-R^{\Lambda}_{w}+\frac1{w-z}\right)^{-1}(R^{\Lambda}_{w}-R^{0}_{w})\left(-R^{0}_{w}+\frac1{w-z}\right)^{-1}\\
=&\left(-R^{\Lambda}_{w}+\frac1{w-z}\right)^{-1}G_{w}\WW  \Lambda_{w} \VV G^{*}_{\bar w}\left(-R^{0}_{w}+\frac1{w-z}\right)^{-1}\\
=&(w-z)^{2}\left(1+(w-z)R^{\Lambda}_{z}\right)G_{w}\WW \Lambda_{w}\VV G^{*}_{\bar w}
\left(1+(w-z)R_{z}^{0}\right).
\end{align*}
\end{proof}
\begin{remark} Notice that by the exchange $R_{z}^{\Lambda}\leftrightarrow R^{0}_{z}$ in the above proof one gets the alternative identity
 \be\label{inv'}
R^{\Lambda}_{z}-R^{0}_{z}
=\left(1+(w-z)R^{0}_{z}\right)G_{w}\WW \Lambda_{w}\VV G^{*}_{\bar w}
\left(1+(w-z)R_{z}^{\Lambda}\right).
\ee
\end{remark}
The previous Lemma provides an essential ingredient in the proof of our  main result:
\begin{theorem}\label{TeoInv} Let  $Z_{Q}\not=\emptyset$, $Q:\rho(A_{0})\to \C({\Y} ,{\X})$ a map statisfying \eqref{M0}, \eqref{M2}, and \eqref{M1}.  Then $Z_{Q}=\rho(A_{0})\cap\rho(A_{Q})$ and  for any $z\in \rho(A_{0})\cap\rho(A_{Q})$ one has $Q_{z}^{-1}=\widehat\Lambda^{Q}_{z}$. Moreover  the resolvent formula
$$
(-A_{Q}+z)^{-1}=R^{0}_{z}+G_{z}\WW Q_{z}^{-1}\VV G^{*}_{\bar z}\,,\qquad  z\in \rho(A_{0})\cap\rho(A_{Q})
\,,$$
holds true.
\end{theorem}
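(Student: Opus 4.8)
The plan is to prove the two inclusions between $Z_{Q}$ and $\rho(A_{0})\cap\rho(A_{Q})$ separately. One of them is immediate: as observed right before the statement, $\Lambda^{Q}_{z}:=Q_{z}^{-1}$ satisfies \eqref{Lambda1} and \eqref{Lambda2}, so Theorem \ref{teo} supplies a self-adjoint operator $A_{Q}$ with $Z_{Q}\subseteq\rho(A_{0})\cap\rho(A_{Q})$ and with the resolvent formula \eqref{resolvent} valid on $Z_{Q}$; in particular $A_{Q}$ is self-adjoint, hence the identities \eqref{inv} and \eqref{inv'} hold with $\Lambda=\Lambda^{Q}$ for \emph{every} $z\in\rho(A_{0})\cap\rho(A_{Q})$. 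Everything therefore reduces to the reverse inclusion $\rho(A_{0})\cap\rho(A_{Q})\subseteq Z_{Q}$. The way I would obtain it is to show that, for each $z\in\rho(A_{0})\cap\rho(A_{Q})$, the operator $Q_{z}$ is a bijection of $\mathsf D$ onto $\X$ whose inverse is the bounded operator $\widehat\Lambda^{Q}_{z}$ of \eqref{LambdaM}. Since $\rho(A_{0})\cap\rho(A_{Q})$ is symmetric about the real axis, the same statement applied to $\bar z$ then places $z$ in $Z_{Q}$; and because the inverse of $Q_{z}$ is unique, this simultaneously identifies $\widehat\Lambda^{Q}_{z}$ (independently of the base point $w$) with $Q_{z}^{-1}=\Lambda^{Q}_{z}$, so that the asserted resolvent formula is just \eqref{resolvent} read with $\Lambda_{z}=Q_{z}^{-1}$ on the enlarged set.

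For the core computation, fix $w\in Z_{Q}$ (nonempty by \eqref{ZM}), fix $z\in\rho(A_{0})\cap\rho(A_{Q})$, and abbreviate $E:=\VV G_{\bar w}^{*}$, $F:=G_{w}\WW$, $T:=1+(w-z)R^{Q}_{z}$ and $T_{0}:=1+(w-z)R^{0}_{z}$. I would first record two identities. From \eqref{LambdaM}, factoring $Q_{w}^{-1}$ out on the left,
$$
\widehat\Lambda^{Q}_{z}=Q_{w}^{-1}\big(1+(w-z)\,E\,T\,F\,Q_{w}^{-1}\big)\,,
$$
whence $\ran(\widehat\Lambda^{Q}_{z})\subseteq\ran(Q_{w}^{-1})=\mathsf D=\dom(Q_{z})$, so that both $Q_{z}\widehat\Lambda^{Q}_{z}$ and $\widehat\Lambda^{Q}_{z}Q_{z}$ are well defined and no domain subtlety can intervene. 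From \eqref{M2} and \eqref{Gzw'} (the latter giving $G_{z}\WW=T_{0}F$),
$$
Q_{z}=Q_{w}-(w-z)\,E\,T_{0}\,F\qquad\text{on }\mathsf D\,.
$$

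Now I would simply expand $Q_{z}\widehat\Lambda^{Q}_{z}$ and $\widehat\Lambda^{Q}_{z}Q_{z}$ using these two formulas together with $Q_{w}Q_{w}^{-1}=1_{\X}$ and $Q_{w}^{-1}Q_{w}=1_{\mathsf D}$. In each case, once the terms proportional to the identity have been split off, what remains is a sum of sandwiched products of $E$, $F$, $Q_{w}^{-1}$ and resolvents in which the combination $T-T_{0}=(w-z)(R^{Q}_{z}-R^{0}_{z})$ appears. At this point one inserts \eqref{inv} and \eqref{inv'} specialized to $\Lambda=\Lambda^{Q}$, that is
$$
T\,F\,Q_{w}^{-1}\,E\,T_{0}\;=\;R^{Q}_{z}-R^{0}_{z}\;=\;T_{0}\,F\,Q_{w}^{-1}\,E\,T\,,
$$
which forces the remaining terms of order $(w-z)^{2}$ to cancel in pairs, leaving $Q_{z}\widehat\Lambda^{Q}_{z}=1_{\X}$ and $\widehat\Lambda^{Q}_{z}Q_{z}=1_{\mathsf D}$. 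Hence $Q_{z}:\mathsf D\to\X$ is a bijection with bounded inverse $\widehat\Lambda^{Q}_{z}$; running the same argument at $\bar z$ gives $z\in Z_{Q}$, so $\rho(A_{0})\cap\rho(A_{Q})\subseteq Z_{Q}$, and together with the first inclusion $Z_{Q}=\rho(A_{0})\cap\rho(A_{Q})$. Finally $Q_{z}^{-1}=\widehat\Lambda^{Q}_{z}=\Lambda^{Q}_{z}$ on this set, and the stated resolvent formula is \eqref{resolvent} for $\Lambda=\Lambda^{Q}$, now valid on all of $\rho(A_{0})\cap\rho(A_{Q})$.

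I do not anticipate a genuine obstacle: exactly as the abstract advertises, the proof is elementary algebra built on the first resolvent identity, here packaged in \eqref{M2}, \eqref{Gzw'}, \eqref{inv} and \eqref{inv'}. The two points that need care are the following. First, one must recognize that $\widehat\Lambda^{Q}_{z}$ of \eqref{LambdaM} is the right candidate for $Q_{z}^{-1}$ and that its range lands automatically in $\dom(Q_{z})=\mathsf D$, so the computation never leaves $\B(\X,\Y)$. Second, because $Q_{z}$ is in general unbounded, one should verify both one-sided inverses $Q_{z}\widehat\Lambda^{Q}_{z}=1_{\X}$ and $\widehat\Lambda^{Q}_{z}Q_{z}=1_{\mathsf D}$ directly rather than inferring one from the other. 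It is also worth noting that hypothesis \eqref{M1} is used only to invoke Theorem \ref{teo}, i.e.\ to guarantee that $A_{Q}$ is self-adjoint and hence that \eqref{inv} and \eqref{inv'} are available; in the purely algebraic bookkeeping with $\VV$, $\WW$, $\X$ and $\Y$ it is never needed.
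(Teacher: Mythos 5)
Your proposal is correct and follows essentially the same route as the paper: you take $\widehat\Lambda^{Q}_{z}$ from \eqref{LambdaM} as the candidate inverse, check that its range lies in $\mathsf D$, and verify both one-sided identities $Q_{z}\widehat\Lambda^{Q}_{z}=1_{\X}$ and $\widehat\Lambda^{Q}_{z}Q_{z}=1_{\mathsf D}$ by the same algebraic expansion via \eqref{M2}, \eqref{Gzw'} and the two resolvent identities \eqref{inv}, \eqref{inv'}. Your explicit remark that the argument must also be run at $\bar z$ (using the symmetry of $\rho(A_{0})\cap\rho(A_{Q})$ about the real axis) to conclude $z\in Z_{Q}$ is a point the paper leaves implicit, but it is not a different method.
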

\begin{proof} 
The first statement of the theorem  is equivalent to show that the two identities 
$\widehat\Lambda_{z}^{Q}Q_{z}=1_{\Y}$ and $Q_{z}\widehat\Lambda_{z}^{Q}=1_{{\X}}$ hold true for any $z\in \rho(A_{0})\cap\rho(A_{Q})$, $z\not=w\in Z_{Q}$. 

\par By \eqref{LambdaM} and \eqref{M2}, one gets
\begin{align*}
&\widehat\Lambda_{z}^{Q}Q_{z}\\
=&\left(Q_{w}^{-1}+(w-z)Q_{w}^{-1}\VV G_{\bar w}^{*}\left(1+(w-z) R^{Q}_{z}\right)G_{w}\WW Q_{w}^{-1}\right)(Q_{w}+(Q_{z}-Q_{w}))\\
=&1+(w-z)Q_{w}^{-1} \VV G_{\bar w}^{*}\left(1+(w-z) R^{Q}_{z}\right)G_{w} \WW
-(w-z)Q_{w}^{-1} \VV G^{*}_{\bar w}G_{z} \WW \\
&-
(w-z)^{2}Q_{w}^{-1}\VV G_{\bar w}^{*}\left(1+(w-z) R^{Q}_{z}\right)G_{w}\WW Q_{w}^{-1} \VV G^{*}_{\bar w}G_{z} \WW \,.
\end{align*}
Hence, by \eqref{Gzw'} and \eqref{inv}, 
\begin{align*}
&(w-z)^{-2}\big(\widehat\Lambda_{z}^{Q}Q_{z}-1\big)\\
=&(w-z)^{-1}Q_{w}^{-1} \VV G_{\bar w}^{*}\left(\left(1+(w-z) R^{Q}_{z}\right)-\left(1+(w-z)R^0_{z}\right)\right)G_{w} \WW \\
&-Q_{w}^{-1}\VV G_{\bar w}^{*}\left(1+(w-z) R^{Q}_{z}\right)G_{w}\WW Q_{w}^{-1}\VV G^{*}_{\bar w}\left(1+(w-z)R^0_{z}\right)G_{w}\WW \\
=&Q_{w}^{-1}\VV G_{\bar w}^{*}\left((R^{Q}_{z}-R^0_{z})-
\left(1+(w-z) R^{Q}_{z}\right)G_{w}\WW Q_{w}^{-1}\VV G^{*}_{\bar w} \left(1+(w-z)R^0_{z}\right)\right)G_{w}\WW 
\\
=&0\,.
\end{align*}
The proof of the other identity is almost the same. At first let us notice that $Q_{z}\widehat\Lambda^{Q}_{z}$ is well defined  since, by definition \eqref{LambdaM} and \eqref{M2}, $$\ran({\widehat\Lambda^{Q}_{z}})\subseteq\ran(Q_{w}^{-1})=\dom(Q_{w})={\mathsf D}=\dom(Q_{z})\,.$$
By \eqref{LambdaM} and \eqref{M2}, one gets
\begin{align*}
&Q_{z}\widehat\Lambda_{z}^{Q}\\
=&(Q_{w}+(Q_{z}-Q_{w}))\left(Q_{w}^{-1}+(w-z)Q_{w}^{-1}\VV G_{\bar w}^{*}\left(1+(w-z) R^{Q}_{z}\right)G_{w}\WW Q_{w}^{-1}\right)\\
=&1+(w-z) \VV G_{\bar w}^{*}\left(1+(w-z) R^{Q}_{z}\right)G_{w}\WW Q_{w}^{-1}
-(w-z)\VV G^{*}_{\bar w}G_{z} \WW Q_{w}^{-1}\\
&-
(w-z)^{2}\VV G_{\bar w}^{*}G_{z}\WW Q_{w}^{-1}\VV G^{*}_{\bar w}\left(1+(w-z) R^{Q}_{z}\right)G_{w}\WW Q_{w}^{-1}\,.
\end{align*}
Hence, by \eqref{Gzw'} and \eqref{inv'}, 
\begin{align*}
&(w-z)^{-2}\big(Q_{z}\widehat\Lambda_{z}^{Q}-1)\\
=&(w-z)^{-1}\VV G_{\bar w}^{*}\left(\left(1+(w-z) R^{Q}_{z}\right)-\left(1+(w-z)R^0_{z}\right)\right)G_{w}\WW Q_{w}^{-1}\\
&-\VV G_{\bar w}^{*}\left(1+(w-z) R^{0}_{z}\right)G_{w}\WW Q_{w}^{-1}\VV G^{*}_{\bar w}\left(1+(w-z)R^{Q}_{z}\right)G_{w}\WW Q_{w}^{-1}\\
=& \VV G_{\bar w}^{*}\left((R^{Q}_{z}-R^{0}_{z})- \left(1+(w-z) R^{0}_{z}\right)G_{w}\WW Q_{w}^{-1}\VV G^{*}_{\bar w}\left(1+(w-z)R^{Q}_{z}\right)\right)G_{w}\WW Q_{w}^{-1}
\\
=&0\,.
\end{align*}
\par To conclude the proof of the theorem we must show that $\widehat\Lambda^{Q}_{z}$ satisfies the identities \eqref{Lambda1} and \eqref{Lambda2} for all  $z,w\in \rho(A_{0})\cap\rho(A_{Q})$. These are immediate consequences of Remark \ref{Lambda*} ($\widehat\Lambda_{z}^{Q}=Q_{z}^{-1}$ does not depend on $w$) and \eqref{M2}. 
\end{proof}
\end{section}
\begin{remark}\label{nodense} Notice that in the proof of the previous theorem we did not use  \eqref{dense}. This hypothesis is only needed in the proof of Theorem \ref{teo}. In case \eqref{krein} still holds, then the statements in Theorem \ref{TeoInv} retain their validity without requiring $\overline{\ker(\tau)}=\H$. 
\end{remark}


\begin{thebibliography}{99}

\bibitem{Albeverio} S. Albeverio, F. Gesztesy, R. Hoegh-Krohn, H. Holden: \emph{{Solvable
  Models in Quantum Mechanics}}, AMS Chelsea Publishing, Providence, RI, 2005.


\bibitem{BL07} J. Behrndt, M. Langer. Boundary value problems for elliptic partial differential operators on bounded domains. {\it J. Funct. Analysis} {\bf 243} (2007), 536-565.
\bibitem{BL} J. Behrndt, M. Langer: 
Elliptic operators, Dirichlet-to-Neumann maps and quasi boundary triples. In: S. Hassi, H. S. V. de Snoo, F. H. Szafraniec (eds.), {\it Operator methods for boundary value problems},  
 Cambridge Univ. Press, Cambridge, 2012, 121-160.
 \bibitem{BLL} J. Behrndt, M. Langer, V. Lotoreichik: Schr\"odinger operators with $\delta$ and $\delta'$-potentials supported on hypersurfaces. {\it  
Ann. Henri Poincar\'e} {\bf 14} (2013),  385-423. 
\bibitem{BLLR} J. Behrndt, M. Langer, V.  Lotoreichik, J. Rohleder: 
Quasi boundary triples and semi-bounded self-adjoint extensions. {\it  
Proc. Roy. Soc. Edinburgh Sect.} {\bf A 147} (2017), 895-916.
\bibitem{BM} J. Behrndt, T. Micheler: Elliptic differential operators on Lipschitz domains and abstract boundary value problems. {\it J. Funct. Analysis} {\bf 267} (2014), 3657-3709. 
\bibitem{BEK} J.F. Brasche, P. Exner, Yu.A. Kuperin: Schr\"odinger operators with singular interactions. {\it J. Math. Anal. Appl.} {\bf 184} (1994), 112-139.
\bibitem{BGP} J. Br\"uning, V. Geyler, K. Pankrashkin: Spectra of self-adjoint extensions and applications to solvable Schr\"odinger operators. 
{\it Rev. Math. Phys.} {\bf 20} (2008), 1-70. 

\bibitem{CFP-INdAM} C. Cacciapuoti, D. Fermi, A. Posilicano:
Relative-zeta and Casimir energy for a semitransparent hyperplane selecting transverse modes. In: A. Michelangeli, G. Dell'Antonio (eds.), {\it Advances in Quantum Mechanics. 
Contemporary trends and open problems.} Springer INdAM Series {\bf 18}, 71-97. Springer, Cham, 2017.
\bibitem{CFP} C. Cacciapuoti, D. Fermi, A. Posilicano: Scattering from local deformations of a semitransparent plane. {\tt arXiv:1807.07916} (2018).
\bibitem{CPP} C. Cacciapuoti, K. Pankrashkin, A. Posilicano: Self- adjoint indefinite Laplacians, to appear in {\it J. Anal. Math.} {\tt arXiv:1611.00696}.
\bibitem{DFT} G.F. Dell'Antonio, R. Figari, A. Teta: Hamiltonians for
Systems of N Particles Interacting through Point Interactions. 
{\it Ann. Inst. Henri Poincar\'e}, {\bf 60} (1994), 253-290. 
\bibitem{DFT97} G.F. Dell'Antonio, R. Figari, A. Teta: Statistics in space dimension two.  
{\it Lett. Math. Phys.} {\bf 40} (1997), 235-256. 
\bibitem{DM} V. A. Derkach, M. M. Malamud: Generalized resolvents and the boundary value problem for Hermitian operators with gaps, {\it J. Funct. Anal.} {\bf 95} (1991), 1-95.
\bibitem{DHMdS} V. A. Derkach, S. Hassi, M. M. Malamud, H. S. V. de Snoo: Boundary triplets and Weyl functions. Recent developments. In: S. Hassi, H. S. V. de Snoo, F. H. Szafraniec (eds.), {\it Operator methods for boundary value problems},  
 Cambridge Univ. Press, Cambridge, 2012, 161-220.
\bibitem{Ex} P. Exner: Spectral properties of Schr\"odinger operators with a strongly attractive $\delta$ interaction supported by a surface, in: {\it Proc. NSF Summer Research Conference, Mt. Holyoke, 2002}, Amer. Math. Soc., Providence, RI, 2003. 
\bibitem{EK} P. Exner, S. Kondej: Bound states due to a strong $\delta$ interaction supported by a curved surface. {\it J. Phys. A} {\bf 36} (2003), 443-457. 
\bibitem{EK05} P. Exner, S. Kondej:  Scattering by local deformations of a straight leaky wire.  {\it J. Phys. A} {\bf 38} (2005), 4865-4874.
\bibitem{GM} F. Gesztesy, M. Mitrea: A description of all self-adjoint extensions of the Laplacian and Kre\u\i n-type resolvent formulas on non-smooth domains. {\it J. Anal. Math.} {\bf 113} (2011), 53-172.
\bibitem{Grubb} G. Grubb: A Characterization of the Non-Local Boundary Value Problems Associated with Elliptic Operators, {\it Ann. Scuola Norm. Sup. Pisa Cl. Sci.} {\bf 22} (1968), 425-513.
\bibitem{Grubb12} G. Grubb: Extension theory for elliptic partial differential operators with pseudodifferential methods. In: S. Hassi, H. S. V. de Snoo, F. H. Szafraniec (eds.), {\it Operator methods for boundary value problems},  
 Cambridge Univ. Press, Cambridge, 2012, 221-258.
\bibitem{Kato} T. Kato: Wave operators and similarity for some non-selfadjoint operators. 
{\it Math. Ann.} {\bf 162} (1965/1966), 258-279. 
\bibitem{Kond} S. Kondej: On the eigenvalue problem for self-adjoint operators with singular perturbations.  {\it Math. Nachr.} {\bf 244} (2002), 150-169. 
\bibitem{K44} M.G. Kre\u\i n: On Hermitian Operators with Deficiency Indices One. 
{\it Dokl. Akad. Nauk SSSR } {\bf 43} (1944), 339-342 (in russian)
\bibitem{K46} M.G. Kre\u\i n: Resolvents of Hermitian Operators with Defect Index 
$(m,m)$. 
{\it Dokl. Akad. Nauk SSSR } {\bf 52} (1946), 657-660 (in russian)
\bibitem{KK} R. Konno, S.T. Kuroda:  On the finiteness of perturbed eigenvalues. 
{\it J. Fac. Sci. Univ. Tokyo Sect. I} {\bf 13} (1966), 55-63.
\bibitem{MMM} M.M. Malamud: Spectral theory of elliptic operators in exterior domains, {\it Russ. J. Math. Phys.} {\bf 17} (2010), 96-125.
\bibitem{MP} A. Mantile, A. Posilicano: {\it Asymptotic Completeness and S-Matrix for Singular Perturbations}, to appear in {\it J. Math. Pures Appl. } {\tt arXiv:1711.07556} (2017). 

\bibitem{MSP-JDE1}A. Mantile, A. Posilicano, M. Sini: Self-adjoint elliptic operators with boundary conditions on not closed hypersurfaces.  {\it J. Differential Equations} {\bf 261} (2016), 1-55. 

\bibitem{MPS} A. Mantile, A. Posilicano, M. Sini: Limiting Absorption Principle, Generalized Eigenfunctions and Scattering Matrix for Laplace Operators with Boundary conditions on Hypersurfaces, to appear in {\it J. Spect. Theory}. {\tt arXiv:1605.03240} (2016).

\bibitem{MPS-JDE2} A. Mantile, A. Posilicano, M. Sini: Uniqueness in inverse acoustic scattering with unbounded gradient across Lipschitz surfaces. 
{\it J. Differential Equations} {\bf 265} (2018), 4101-4132.
 
\bibitem {P01}A. Posilicano: A Kre\u{\i}n-like formula for singular
perturbations of self-adjoint operators and applications. \emph{J. Funct.
Anal.}, \textbf{183} (2001), 109-147.

\bibitem {P03}A. Posilicano. Self-adjoint extensions by additive
perturbations. \emph{Ann. Sc. Norm. Super. Pisa Cl. Sci.}(V) \textbf{2} (2003), 1-20.

\bibitem {P04}A. Posilicano: Boundary triples and Weyl functions for
singular perturbations of self-adjoint operators. \emph{Methods Funct. Anal.
Topology}, \textbf{10} (2004), 57-63.

\bibitem {P08}A. Posilicano. Self-adjoint extensions of restrictions,
\emph{Oper. Matrices}, \textbf{2} (2008), 483-506.

\bibitem{PR} A. Posilicano, L.  Raimondi: Krein's resolvent formula for self-adjoint extensions of symmetric second-order elliptic differential operators.  
{\it J. Phys. A} {\bf 42} (2009), 015204, 11 pp. 
\bibitem{Saa} Sh.N. Saakjan: On the Theory of Resolvents of a Symmetric Operator with 
Infinite Deficiency Indices. {\it Dokl. Akad. Nauk Arm. SSR} {\bf 44} (1965), 
193-198 (in russian)


\bibitem{Schm} K. Schm\"udgen: {\it Unbounded Self-adjoint Operators on Hilbert Space.} Springer. Dordrecht 2012.

\end{thebibliography}
\end{document}